\newtheorem{theorem}{Theorem}
\newtheorem{remark}{Remark}
\numberwithin{equation}{section}
\begin{document}

\title[remarks on positive solutions]{Remarks on positive solutions to nonlinear problems and numerical methods}
\author{Y.~Adachi, Novrianti$^{\star}$ and O.~Sawada}
\address{Applied Physics Course, Faculty of Engineering, Gifu University, Yanagido 1-1, Gifu, 501-1193, Japan}
\email{${}^\star$ x3912006@edu.gifu-u.ac.jp \quad (corresponding author)}
\keywords{positive solutions, difference equations, Belousov-Zhabotinsky reaction, invariant regions.}

\baselineskip=16pt

%
%

\begin{abstract}
The existence of positive solutions to the system of ordinary differential equations related to the Belousov-Zhabotinsky reaction is established. The key idea is to use successive approximation of solutions, ensuring its positivity. To obtain the positivity and invariant region for numerical solutions, the system is discretized as difference equations of explicit form, employing operator splitting methods with linear stability conditions.
\end{abstract}

\maketitle

%
%

\section{Introduction}
\noindent We consider the reaction diffusion equations of the Keener-Tyson model for the Belousov-Zhabotinsky reaction in the whole space ${\mathbb R}^n$, $n \in {\mathbb N}$:
\[
  \left\{ \begin{array}{l}
    \partial_t u = \Delta u + u (1-u)/\varepsilon - h v (u-q)/(u+q), \\
    \partial_t v = d \Delta v - v + u.
  \end{array} \right. \leqno{\rm{(BZ)}}
\]
See \cite{KT86}. Here, $u = u(x, t)$ and $v = v(x, t)$ stand for the unknown scalar functions at $x \in {\mathbb R}^n$ and $t > 0$ which denote the concentrations in a vessel of $\rm{HBrO_2}$ and $\rm{Ce^{4+}}$, respectively. In \cite{Chen00}, an example of constants is listed-up as $\varepsilon \approx 0.032$, $q \approx 2.0 \times 10^{-4}$ and $d \approx 0.6 \times \varepsilon$; $h := \rho/\varepsilon$ and $\rho \approx 1/2$ stands for the excitability which governs dynamics of a pattern formulation. It has been used the notation of differentiation; $\partial_t := \partial/\partial t$, $\Delta := \sum_{i=1}^n \partial_i^2$, $\partial_i := \partial/\partial x_i$ for $i = 1, \ldots, n$.

In \cite{KNST19}, the time-global existence of positive unique smooth solutions to the Cauchy problem of (BZ) was established. They also obtained that $S := (q, \bar u)^2$ is an invariant region, where $\bar u \in (q, 1)$ is a root of $u (1 - u) (u + q) - \varepsilon h q (u - q) = 0$. That means, if the initial datum $\big( u_0, v_0 \big) \in S$, then the solution $\big( u, v \big) \in S$ for $t > 0$. The aim of this paper is to establish the similar results for numerical solutions to the difference equations of discretized (BZ).

The key idea in \cite{KNST19} is to use the following successive approximation:
\begin{align*}
  \partial_t u_{\ell+1} & = \Delta u_{\ell+1} + u_\ell (1-u_{\ell+1})/\varepsilon - h v_\ell (u_{\ell+1}-q)/(u_\ell+q), \\
  \partial_t v_{\ell+1} & = d \Delta v_{\ell+1} - v_{\ell+1} + u_\ell
\end{align*}
for $\ell \in {\mathbb N}$ with $u_{\ell+1}|_{t=0} = u_0$ and $v_{\ell+1}|_{t=0} = v_0$, starting at $u_1 := e^{t \Delta} u_0$ and $v_1 := e^{-t} e^{t \Delta} v_0$ with non-negative initial datum $u_0, v_0 \in BUC({\mathbb R}^n)$. The virtue of this scheme is to be ensured that $u_\ell \geq 0$ and $v_\ell \geq 0$ for all $\ell \in {\mathbb N}$, automatically. So, one can obtain the time-local non-negative classical solutions to (BZ) as $u = \lim_{\ell \to \infty} u_\ell$, $v = \lim_{\ell \to \infty} v_\ell$. We would emphasize that this technique can be applicable to construct positive (or, non-negative) solutions to ordinary differential equations and positive numerical solutions to finite difference equations.

It has been known that Mimura and his collaborators obtained the positive numerical solutions to some reaction diffusion equations; see \cite{MKY71, MN72}. In particular, our scheme is similar to that in \cite{MN72}, even though they did not employ operator splitting methods. Moreover, there are many literatures on structure-preserving numerical methods for partial differential equations; see e.g. \cite{FM11} and references therein. However, it seems to be new that the scheme leads us an invariant region for numerical solutions. This fact essentially follows from the maximum principle for numerical solutions to discretized reaction diffusion equations.

This paper is organized as follows. In Section~2, we shall discuss the existence of positive solutions to the system of ordinary differential equations. Section~3 will be devoted to give a difference equations for positive solutions. We state results on the discretization of (BZ) for numerical solutions and an invariant region in Section~4.

%
%

\vspace{2mm}
\noindent {\bf Acknowledgment.} The authors would like to express their gratitude to Professor Hiroyuki Usami, Professor Shinya Miyajima and Professor Shintaro Kondo for letting them know several techniques and results on ordinary differential equations and numerical methods. The authors would also like to express their gratitude to Professor Yoshihisa Morita for letting them know the results by Mimura and his collaborators \cite{MKY71, MN72}.

%
%

\section{Ordinary differential equations}
\noindent In this section, the construction of time-local positive solutions to the system of first order ordinary differential equations (ODE) is discussed. Let $m \in {\mathbb N}$, we deal with the following system of nonlinear ODE:
\[
  u_i' = - f_i (\bm{u}) u_i + g_i (\bm{u}), \quad t > 0, \quad u_i(0) = a_i, \quad i = 1, \ldots, m. \leqno{({\rm{P}})}
\]
Throughout this paper, for simplicity of notation, $t = 0$ is the initial time, ${}' := d/dt$, $\bm{u} := \big( u_1, \ldots, u_m \big)$. Here, $u_i = u_i (t)$ are unknown functions for $t > 0$ and $i = 1, \ldots, m$. Besides, $a_i \geq 0$ are given initial data, $f_i \geq 0$ and $g_i \geq 0$ are given function. We often rewrite (P) into the following vector valued ODE:
\[
  \bm{u}' = - F (\bm{u}) \bm{u} + \bm{g} (\bm{u}), \quad t > 0, \quad \bm{u} (0) = \bm{a}. \leqno{({\rm{P'}})}
\]
Here, we have denoted $\bm{a} := \big( a_1, \ldots, a_m \big)$, $\bm{g} := \big( g_1, \ldots, g_m \big)$, and  $F$ is the diagonal $m \times m$ matrix whose $(i, i)$-component is $f_i$.

When $m = 2$, $u := u_1$, $v := u_2$ and
\[
  F := \left( \begin{array}{cc}
    u/\varepsilon + hv/(u+q) & 0 \\
    0 & 1
  \end{array} \right), \quad
  \bm{g} := \left( \begin{array}{c}
    u/\varepsilon + hqv/(u+q) \\
    u
  \end{array} \right)
\]
are taken, then (P) is equivalent to the uniform-in-space (BZ). The model problem (P) is often used to describe the dynamics of nonlinear chemical or biological systems, for example, the Lotka-Volterra type equations of predator-prey models with density-dependent inhibition (Holling's type II), and the Gierer-Meinhardt model. We especially treat fractional nonlinear terms, and the denominator takes on the value of zero for negative solutions. Hence, the positivity of solutions to (P) is strongly required, and so is even in its approximation.

We shall state the main results in this paper.

%
%

\begin{theorem}\label{ode}
If $f_i, g_i \geq 0$ are local Lipschitz continuous and $a_i \geq 0$, then there exists a time-local unique solution $u_i \geq 0$ to {\rm{(P)}}.
\end{theorem}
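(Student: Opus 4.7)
The plan is to transplant the successive approximation scheme described in the introduction into the ODE setting. Define iterates $\bm{u}_\ell = (u_{1,\ell}, \ldots, u_{m,\ell})$ recursively: set $\bm{u}_0(t) := \bm{a}$ for all $t \geq 0$, and for $\ell \geq 0$ let $u_{i,\ell+1}$ be the unique solution of the \emph{linear} scalar problem
\[
u_{i,\ell+1}'(t) = - f_i\bigl(\bm{u}_\ell(t)\bigr)\, u_{i,\ell+1}(t) + g_i\bigl(\bm{u}_\ell(t)\bigr), \quad u_{i,\ell+1}(0) = a_i.
\]
Since the coefficients are frozen at the previous iterate, variation of constants gives the explicit formula
\[
u_{i,\ell+1}(t) = a_i\, e^{-\int_0^t f_i(\bm{u}_\ell)\,ds} + \int_0^t e^{-\int_s^t f_i(\bm{u}_\ell)\,d\tau}\, g_i\bigl(\bm{u}_\ell(s)\bigr)\,ds.
\]
Because $a_i \geq 0$, $f_i \geq 0$ and $g_i \geq 0$, the non-negativity $u_{i,\ell+1} \geq 0$ is built into the scheme for every $\ell$, with no further work required.

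Next I would fix the time horizon. Choose $R > \max_i a_i$ and let $M := \max_i \sup_{|\bm{w}| \leq R} g_i(\bm{w})$, which is finite by local Lipschitz continuity. The exponential factors above are bounded by one, so the formula yields $u_{i,\ell+1}(t) \leq a_i + Mt$. Picking $T > 0$ with $\max_i a_i + MT \leq R$, a straightforward induction on $\ell$ establishes the uniform a-priori bound $\|\bm{u}_\ell\|_{C([0,T])} \leq R$. On the closed ball of radius $R$, local Lipschitz continuity furnishes a constant $L$ such that $|f_i(\bm{w}) - f_i(\bm{w}')| + |g_i(\bm{w}) - g_i(\bm{w}')| \leq L|\bm{w} - \bm{w}'|$.

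To close the argument, subtract the equations for consecutive iterates to obtain
\[
(u_{i,\ell+1} - u_{i,\ell})' = -f_i(\bm{u}_\ell)(u_{i,\ell+1} - u_{i,\ell}) - \bigl[f_i(\bm{u}_\ell) - f_i(\bm{u}_{\ell-1})\bigr] u_{i,\ell} + g_i(\bm{u}_\ell) - g_i(\bm{u}_{\ell-1}).
\]
Variation of constants together with the uniform bound on $u_{i,\ell}$ and the Lipschitz estimate yields
\[
\|\bm{u}_{\ell+1} - \bm{u}_\ell\|_{C([0,T])} \leq C\, T\, \|\bm{u}_\ell - \bm{u}_{\ell-1}\|_{C([0,T])}
\]
with $C$ depending only on $R$ and $L$. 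After shrinking $T$ so that $CT < 1$, $\{\bm{u}_\ell\}$ is Cauchy in $C([0,T]; \mathbb{R}^m)$ and its limit $\bm{u}$ is non-negative. Passing to the limit in the corresponding integral equation shows that $\bm{u}$ solves (P). Uniqueness follows from the same Gronwall estimate applied to the difference of two candidate solutions on a common interval.

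The main obstacle I anticipate is coordinating the two smallness conditions on $T$: one must first close the uniform a-priori bound $\|\bm{u}_\ell\|_\infty \leq R$ so that the local Lipschitz constants are legitimately applicable at every iteration, and then possibly shrink $T$ further to obtain the contractive estimate. Both steps are routine in isolation, but the order matters: $R$ must be chosen \emph{before} $L$, and the final $T$ must depend only on $\bm{a}$ and on bounds of $f_i, g_i$ on a fixed ball, so the argument genuinely produces a time-local solution rather than a circular one.
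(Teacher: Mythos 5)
Your proposal is correct, and its skeleton is the same as the paper's: iterate the linearized system with coefficients frozen at the previous step, prove a uniform bound and non-negativity for every iterate, show the sequence is Cauchy in $C([0,T];\mathbb{R}^m)$, pass to the limit in the integral equation, and get uniqueness from Gronwall. Where you genuinely diverge is in how positivity of the iterates is obtained. The paper treats the frozen-coefficient problem abstractly (it even remarks that for general matrix-valued $F$ one would invoke perturbation theory) and proves $u_i^{\ell+1}\geq 0$ by a first-touch contradiction argument; to run that argument it adds the simplifying assumptions $\bm{a}\neq\bm{0}$ and $g_i(\bm{v})>0$ for $\bm{v}\neq\bm{0}$, since it needs strict positivity of $g_i(\bm{u}^\ell)$ at the touching time. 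You instead exploit the fact that $F$ is diagonal, so each component of the linearized problem is a scalar equation solved exactly by variation of constants; the explicit formula
\[
u_{i,\ell+1}(t) = a_i\, e^{-\int_0^t f_i(\bm{u}_\ell)\,ds} + \int_0^t e^{-\int_s^t f_i(\bm{u}_\ell)\,d\tau}\, g_i\bigl(\bm{u}_\ell(s)\bigr)\,ds
\]
is manifestly non-negative under only $a_i, f_i, g_i\geq 0$, so you prove the theorem as stated without the paper's extra hypotheses, and you also get the a priori bound $u_{i,\ell+1}\leq a_i+Mt$ for free rather than through the paper's max-norm integral estimate with $M_f, M_g$. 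You additionally spell out the contraction estimate $\|\bm{u}_{\ell+1}-\bm{u}_\ell\|\leq CT\|\bm{u}_\ell-\bm{u}_{\ell-1}\|$, which the paper only asserts. The trade-off is that the paper's contradiction argument does not rely on the diagonal structure of $F$ and so adapts more readily to systems where the damping term couples components (and it is the argument that transfers to the PDE setting of \cite{KNST19}), whereas your route is cleaner and assumption-free precisely because (P) has the form $u_i'=-f_i(\bm{u})u_i+g_i(\bm{u})$ with a scalar, sign-definite coefficient in front of $u_i$.
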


\begin{proof}
Let $a_i \geq 0$ for $i = 1, \ldots, m$. For the sake of simplicity, let us assume that $\bm{a} \neq \bm{0}$ and $g_i (\bm{v}) > 0$ for $\bm{v} \neq \bm{0}$. Making the approximation sequences $\big\{ u_i^\ell \big\}_{\ell = 1}^\infty$ for $i = 1, \ldots, m$, we begin with $\bm{u}^1 (t) := \bm{a}$ for $t \geq 0$. For each $\ell \in {\mathbb N}$, we successively define $\bm{u}^{\ell + 1}$ as the solution to the system of linear non-autonomous ODE:
\[
    \big( \bm{u}^{\ell+1} \big)' = -F (\bm{u}^\ell) \bm{u}^{\ell+1} + \bm{g} (\bm{u}^\ell), \quad t > 0, \quad \bm{u}^{\ell+1}(0) =\bm{a} \leqno{\rm{(SA)}}
\]
with vectors of non-negative $\bm{u}^\ell$ and $\bm{a}$. Note that (SA) is equivalent to the following integral equation
\[
  \bm{u}^{\ell+1} (t) = \bm{a} - \int_0^t F ( \bm{u}^\ell (s) ) \bm{u}^{\ell+1} (s) ds + \int_0^t \bm{g} (\bm{u}^\ell (s)) ds. \leqno{\rm{(INT)}}
\]
Heuristically, if $F$ is a constant matrix, then $\bm{v}' = - F \bm{v}$, $t > 0$, $\bm{v}(0) = \bm{a}$ admits a solution $\bm{v} (t) = e^{- F t} \bm{a}$. In this situation, we thus have
\[
  \bm{u}^{\ell+1} (t) = e^{- F t} \bm{a} + \int_0^t e^{- F (t-s)} \bm{g} (\bm{u}^\ell (s)) ds.
\]
For general matrix-valued functions $F$, one may construct $\bm{u}^{\ell+1}$ for each $\ell \in {\mathbb N}$ by perturbation theory, at least time-locally.

Obviously, $u_i^1 \geq 0$ for $i = 1, \ldots, m$ and $\| \bm{u}^1 (t) \| = \| \bm{a} \|$ for $t \geq 0$. Here, we have used the max norm for vectors $\| \bm{v} \| := \max_{i = 1, \ldots, m} | v_i |$ for $\bm{v} := \big( v_1, \ldots, v_m \big)$, as well as to matrices $\| F \| := \max_{i, j = 1, \ldots, m} |f_{ij}|$ for $F := \big( f_{ij} \big)$. In what follows, we will show the positivity and boundedness of $u_i^{\ell + 1}$ by induction in $\ell$. For $\bm{u}^2$, it holds true that
\begin{align*}
  \| \bm{u}^2 (t) \| & \leq \| \bm{a} \| + \int_0^t \| F (\bm{u}^1 (s)) \| \cdot \| \bm{u}^2 (s) \| ds + \int_0^t \| \bm{g} (\bm{u}^1 (s)) \| ds \\
  & \leq \| \bm{a} \| + \| F (\bm{a}) \| \cdot t \cdot \max_{0 \leq s \leq t} \| \bm{u}^2 (s) \| + t \cdot \| \bm{g} (\bm{a}) \|.
\end{align*}
Taking $\max_{0 \leq t \leq \tau}$ in both hand side, we have
\[
  \| \bm{u}^2 (t) \| \leq 2 \| \bm{a} \| \quad \text{for} \quad t \in [0, T_2]
\]
with $T_2 := \min \big\{ 1/(3 \| F (\bm{a}) \|), \| \bm{a} \|/(3 \| \bm{g} (\bm{a}) \|) \big\}$. In addition, we can also obtain that $u_i^2 \geq 0$. Indeed, let us assume that there exists a $t_\ast \in (0, T_2]$ such that $u_i^2 (t_\ast) = 0$ for some $i = 1, \ldots, m$. Without loss of generality, $t_\ast$ is the first time when $u_i^2$ touches $0$. So, at $t_\ast$, we see that $(u_i^2)' \leq 0$, $f_i (\bm{u}^1) {u}_i^2 = 0$, and $g_i (\bm{u}^1) > 0$. This contradicts to the fact that $\bm{u}^2$ is a solution to (SA) with $\ell = 1$.

Let $\ell \geq 2$. Assume that $\| \bm{u}^\ell (t) \| \leq 2 \| \bm{a} \|$ and $u_i^\ell (t) \geq 0$ hold for $t \in [0, T_0]$ and $i = 1, \ldots, m$, where $T_0 > 0$ will be determined later. We now argue on $\bm{u}^{\ell+1}$. By assumption, it is easy to see that
\begin{align*}
  \| \bm{u}^{\ell+1} (t) \| & \leq \| \bm{a} \| + \int_0^t \| F (\bm{u}^\ell (s)) \| \cdot \| \bm{u}^{\ell+1} (s) \| ds + \int_0^t \| \bm{g} (\bm{u}^\ell (s)) \| ds \\
  & \leq \| \bm{a} \| + M_f \cdot t \cdot \max_{0 \leq s \leq t} \| \bm{u}^{\ell+1} (s) \| + t \cdot M_g \\
  & \leq 2 \| \bm{a} \| \quad \text{for} \quad t \in [0, T_0]
\end{align*}
with $T_0 := \min \big\{ 1/(3 M_f), \| \bm{a} \|/(3 M_g) \big\}$, where
\[
  M_f := \sup_{\| \bm{v} \| \leq 2 \| \bm{a} \|} \| F (\bm{v}) \|, \quad M_g := \sup_{\| \bm{v} \| \leq 2 \| \bm{a} \|} \| \bm{g} (\bm{v}) \|.
\]
In addition, we can also see that $u_i^{\ell+1} \geq 0$ for $i$ by the same contradiction argument above. This means that $\| \bm{u}^\ell (t) \| \leq 2 \| \bm{a} \|$ and $u_i^\ell \geq 0$ hold for all $\ell \in {\mathbb N}$ and $i = 1, \ldots, m$ for $t \in [0, T_0]$.

It is straightforward to get the continuity of solutions. One may also see that $\big\{ \bm{u}_\ell \big\}_{\ell = 1}^\infty$ is a Cauchy sequence in $C([0, T_0]; {\mathbb R}^m)$. So, the limit $\big( u_1 (t), \ldots, u_m (t) \big) = \bm{u} (t) = \lim_{\ell \to \infty} \bm{u}^\ell (t)$ exists for $t \in [0, T_0]$, and satisfies (P). Note that $u_i (t) \geq 0$ for $i = 1, \ldots, m$ by construction. The uniqueness follows from Gronwall's inequality, directly.
\end{proof}

In Theorem~\ref{ode}, it is not needed to use neither the existence of stable solutions to (P), comparison principle, nor a priori estimates by Lyapunov functions.

%
%

\section{Difference equations}
\noindent We will argue the numerical algorithm for positive solutions. We first discuss a discretization of (P). To obtain positive solutions, our proposal is to choose the following difference equations, mixing the forward and backward Euler methods:
\[
  \frac{\bm{u}^{k+1} - \bm{u}^k}{{\it\Delta} t} = - F (\bm{u}^k) \bm{u}^{k+1} + \bm{g} (\bm{u}^k), \quad k \in {\mathbb N}_0 := {\mathbb N} \cup \{ 0 \}, \leqno{({\rm{DE}})}
\]
where $\bm{u}^k = \big( u_1^k, \ldots, u_m^k \big)$, $t_k := k {\it\Delta} t$ for ${\it\Delta} t > 0$ and $u_i^0 = a_i \geq 0$ for $i = 1, \ldots, m$. Clearly, (DE) is a mimic of (SA). In addition, we obviously see that the numerical solution $\bm{u}^k$ to (DE) tends to the solution $\bm{u} (t)$ to (P) at $t = t_k$ for each $k$ as ${\it\Delta} t \to 0$.

%
%

\begin{theorem}\label{de}
If $f_i, g_i \geq 0$ are Lipschitz continuous, ${\it\Delta} t > 0$ and $u_i^0 \geq 0$, then the numerical solution $u_i^k \geq 0$ to {\rm{(DE)}} exists for $k \in {\mathbb N}$.
\end{theorem}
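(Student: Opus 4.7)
The plan is to solve (DE) explicitly for $\bm{u}^{k+1}$ in terms of $\bm{u}^k$ and read off positivity component-by-component. First I would rewrite the scheme as
\[
  (I + {\it\Delta} t \cdot F(\bm{u}^k)) \bm{u}^{k+1} = \bm{u}^k + {\it\Delta} t \cdot \bm{g}(\bm{u}^k).
\]
Because $F(\bm{v})$ is diagonal with non-negative entries $f_i(\bm{v}) \geq 0$ whenever $\bm{v}$ has non-negative components, the matrix $I + {\it\Delta} t \cdot F(\bm{u}^k)$ is diagonal with entries $1 + {\it\Delta} t \cdot f_i(\bm{u}^k) \geq 1$. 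In particular it is invertible, and inversion is trivial: we obtain the componentwise formula
\[
  u_i^{k+1} = \frac{u_i^k + {\it\Delta} t \cdot g_i(\bm{u}^k)}{1 + {\it\Delta} t \cdot f_i(\bm{u}^k)}, \quad i = 1, \ldots, m.
\]
This exhibits $\bm{u}^{k+1}$ as a well-defined function of $\bm{u}^k$, so the sequence $\{\bm{u}^k\}_{k \in {\mathbb N}_0}$ exists for every ${\it\Delta} t > 0$ with no restriction on step size, regardless of any Lipschitz or stability bound.

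Next I would establish non-negativity by induction on $k$. The base case $k = 0$ is the hypothesis $u_i^0 = a_i \geq 0$. For the inductive step, assume $u_i^k \geq 0$ for all $i$. Then $\bm{u}^k$ lies in the non-negative orthant where $f_i, g_i \geq 0$ are defined by assumption, so $f_i(\bm{u}^k) \geq 0$ and $g_i(\bm{u}^k) \geq 0$. In the explicit formula above, the numerator is then a sum of non-negative quantities and the denominator is at least $1$, so $u_i^{k+1} \geq 0$, completing the induction.

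There is no real obstacle here: the Lipschitz continuity hypothesis on $f_i, g_i$ plays no role in mere existence and positivity of the iterates (it would only be needed for convergence of $\bm{u}^k$ to the solution of (P) as ${\it\Delta} t \to 0$, which is the remark following the statement). The only point worth stressing in the write-up is that it is the combination of (i) the diagonal structure of $F$ and (ii) the choice to evaluate the absorption coefficient $F$ implicitly at $\bm{u}^{k+1}$ while evaluating $F$ and $\bm{g}$ at $\bm{u}^k$ that makes the scheme both explicitly solvable and unconditionally positivity-preserving, mirroring the role played by the operator $e^{-Ft}$ in the continuous successive approximation (SA).
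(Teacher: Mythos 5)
Your proposal is correct and follows essentially the same route as the paper: rewrite (DE) in the explicit componentwise form $u_i^{k+1} = \bigl(u_i^k + g_i(\bm{u}^k)\,{\it\Delta} t\bigr)/\bigl(1 + f_i(\bm{u}^k)\,{\it\Delta} t\bigr)$ and conclude non-negativity by induction on $k$. Your additional remarks (diagonality of $F$, unconditional step size, the inessential role of the Lipschitz hypothesis) are accurate elaborations of the same argument.
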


\begin{proof}[Proof of Theorem~\ref{de}]
We rewrite (DE) into the explicit form as
\[
  u_i^{k+1} = \frac{u_i^k + g_i (\bm{u}^k) {\it\Delta} t}{1 + f_i (\bm{u}^k) {\it\Delta} t}, \quad k \in {\mathbb N}_0, \quad i = 1, \ldots, m.
\]
So, $u_i^{k+1} \geq 0$, if $u_i^k \geq 0$. Thus, one can prove it by induction.
\end{proof}

The advantage of Theorem~\ref{de} is to take arbitrary large ${\it\Delta} t$. 

The spirit of (DE) is still valid on the numerical methods for construction of positive solutions to the partial differential equations (PDE) of parabolic type. For simplicity, let $n = m = 1$. We consider the discretization $u_j^k$ of $u (x_j, t_k)$ for $x_j := j {\it\Delta} x$ and $t_k := k {\it\Delta} t$ which satisfies
\[
  \frac{u_j^{k+1} - u_j^k}{{\it\Delta} t} = d \frac{u_{j+1}^k - 2 u_j^k + u_{j-1}^k}{{\it\Delta} x^2} - f (u_j^k) u_j^{k+1} + g (u_j^k)
\]
with initial data $u_j^0 \geq 0$ for all $j$. So, it is easy to see that $u_j^k \geq 0$ for all $j$ and $k$, provided if the linear stability condition ${\it\Delta} t/{\it\Delta} x^2 \leq 1/(2 d)$ for $d > 0$ in the Lax-Richtmyer sense is assumed. Note that the similar scheme has also been introduced by Mimura in \cite{MKY71, MN72} for ensuring the postivity of numerical solutions, basically.

We sometimes employ the algorithm of operator splitting methods (OSM) for solving the discretized reaction diffusion equation. For using OSM, the linear stability conditions are also required; see Section~4.

%
%

\section{Numerical solutions to (BZ)}
\noindent We shall establish an invariant region for numerical solutions to (BZ). For the sake of simplicity, let $n = 1$, and let us concern (BZ) in bounded interval $x \in [0, L]$ for $L > 0$ with the homogeneous Neumann boundary conditions $\partial_x u (0, t) = \partial_x u (L, t) = 0$ or, the periodic boundary conditions $u (x, t) = u (x+L, t)$ for $t > 0$. For discretization of (BZ), we put $u^k_j \approx u (x_j, t_k)$ and $v^k_j \approx v (x_j, t_k)$ for $j = 0, \ldots, J$ and $k \in {\mathbb N}_0$, taking the average of integration. Here, $J \in {\mathbb N}$, $x_j := j {\it\Delta} x$, $t_k := k {\it\Delta} t$ for ${\it\Delta} x > 0$ and ${\it\Delta} t > 0$; $L = J \it\Delta x$.

In here, we select the operator splitting methods. For the discretization of uniform-in-space (BZ) with (DE) algorithm, let us consider
\[
  \left\{ \begin{array}{l}
    \displaystyle \frac{u_j^{k+1}-u_j^k}{{\it\Delta} t} = \frac{u_j^k (1-u_j^{k+1})}{\varepsilon} - h v_j^k \frac{u_j^{k+1}-q}{u_j^k+q}, \\[10pt]
    \displaystyle \frac{v_j^{k+1}-v_j^k}{{\it\Delta} t} = -v_j^{k+1} + u_j^k
  \end{array} \right. \leqno{\rm{(D_o)}}
\]
for $j = 1, \ldots, J-1$ and $k \in {\mathbb N}_0$. On the other hand, for the discretization of the heat equations, we use the standard FTCS (forward difference for time and second-order central difference for space)
\[
  \left\{ \begin{array}{l}
    \displaystyle \frac{\tilde u_j^{k+1} - \tilde u_j^k}{{\it\Delta} t} = \frac{\tilde u_{j+1}^k - 2 \tilde u_j^k + \tilde u_{j-1}^k}{{\it\Delta} x^2}, \\[10pt]
    \displaystyle \frac{\tilde v_j^{k+1} - \tilde v_j^k}{{\it\Delta} t} = d \frac{\tilde v_{j+1}^k - 2 \tilde v_j^k + \tilde v_{j-1}^k}{{\it\Delta} x^2}
  \end{array} \right. \leqno{\rm{(D_h)}}
\]
for $j = 1, \ldots, J-1$ and $t \in {\mathbb N}_0$; at $j = 0$ and $j = J$, we give certain definition by boundary conditions. Our algorithm is to solve alternate $\rm{(D_o)}$ and $\rm{(D_h)}$. That is to say, a pair of the series $\big\{ u_j^k, v_j^k \big\}$ is given as
\begin{enumerate}
\item Put $u_j^0 \approx u_0 (x_j)$ and $v_j^0 \approx v_0 (x_j)$, the average of integration.
\item Construct $u_j^1, v_j^1$ by $\rm{(D_o)}$ with $k = 0$.
\item Construct $\tilde u_j^1, \tilde v_j^1$ by $\rm{(D_h)}$ with $\tilde u_j^0 := u_j^1$ and $\tilde v_j^0 := v_j^1$.
\item Construct $u_j^2, v_j^2$ by $\rm{(D_o)}$ with $u_j^1 := \tilde u_j^1$ and $v_j^1 := \tilde v_j^1$.
\item Construct $\tilde u_j^2, \tilde v_j^2$ by $\rm{(D_h)}$ with $\tilde u_j^1 := u_j^2$ and $\tilde v_j^1 := v_j^2$.
\item Repeat this process.
\end{enumerate}

If $d = 0$, then we skip the steps of construction $\tilde v_j^k$, that is, $\tilde v_j^k := v_j^k$. We will state the results on numerical solutions to discretized (BZ).

%
%

\begin{theorem}\label{ir}
Let $\varepsilon, h, {\it\Delta} t, {\it\Delta} x > 0$, $d \geq 0$ and $q \in (0, 1)$. Define $u_j^k, v_j^k$ as numerical solutions to alternate $\rm{(D_o)}$ and $\rm{(D_h)}$. If $u_j^0, v_j^0 \in (q, 1)$ for $j$, then $u_j^k, v_j^0 \in (q, 1)$ for $j$ and $k$, provided if ${\it\Delta} t/{\it\Delta} x^2 \leq 1/\max \{ 2, 2 d \}$.
\end{theorem}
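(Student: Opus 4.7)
\medskip

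The plan is to induct on $k$ and show separately that each of the two half-steps $\rm{(D_o)}$ and $\rm{(D_h)}$ leaves the box $(q,1)^2$ invariant at every grid point; chaining them then gives the result for the full scheme. Throughout I would write $\lambda := {\it\Delta}t/{\it\Delta}x^2$, so the hypothesis reads $\lambda \leq 1/2$ and $d\lambda \leq 1/2$ simultaneously.

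\medskip

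For the reaction half-step $\rm{(D_o)}$, I would first solve for $(u_j^{k+1}, v_j^{k+1})$ explicitly in the manner of Theorem~\ref{de}:
\[
  u_j^{k+1} = \frac{u_j^k + {\it\Delta}t \big( u_j^k/\varepsilon + h q v_j^k/(u_j^k+q) \big)}{1 + {\it\Delta}t \big( u_j^k/\varepsilon + h v_j^k/(u_j^k+q) \big)}, \qquad v_j^{k+1} = \frac{v_j^k + {\it\Delta}t\, u_j^k}{1 + {\it\Delta}t}.
\]
The $v$-update is then a convex combination of $v_j^k$ and $u_j^k$, both assumed in $(q,1)$, so $v_j^{k+1}\in(q,1)$ is immediate. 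For the $u$-update, writing $f_u := u_j^k/\varepsilon + h v_j^k/(u_j^k+q)$ and $g_u := u_j^k/\varepsilon + hq v_j^k/(u_j^k+q)$, the key algebraic identities
\[
  g_u - q f_u = \frac{u_j^k (1-q)}{\varepsilon}, \qquad f_u - g_u = \frac{h v_j^k (1-q)}{u_j^k + q}
\]
(both strictly positive under the inductive hypothesis) yield
\[
  u_j^{k+1} - q = \frac{(u_j^k - q) + (g_u - q f_u){\it\Delta}t}{1 + f_u {\it\Delta}t} > 0, \quad 1 - u_j^{k+1} = \frac{(1 - u_j^k) + (f_u - g_u){\it\Delta}t}{1 + f_u {\it\Delta}t} > 0,
\]
so $u_j^{k+1} \in (q,1)$ without any restriction on ${\it\Delta}t$. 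This is the only step that really uses the specific algebraic structure of (BZ), and I expect it to be the main obstacle: verifying the two cancellation identities above is what makes $q$ and $1$ both barriers for the explicitly resolved $\rm{(D_o)}$.

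\medskip

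For the diffusion half-step $\rm{(D_h)}$, I would rewrite the FTCS update as
\[
  \tilde u_j^{k+1} = (1-2\lambda) \tilde u_j^k + \lambda \tilde u_{j+1}^k + \lambda \tilde u_{j-1}^k, \qquad \tilde v_j^{k+1} = (1-2d\lambda) \tilde v_j^k + d\lambda \tilde v_{j+1}^k + d\lambda \tilde v_{j-1}^k.
\]
Under the stability condition $\lambda \leq 1/\max\{2,2d\}$, the three coefficients in each line are non-negative and sum to $1$, so each update is a convex combination. Hence for $j=1,\dots,J-1$ the values lie in the convex hull of $\{\tilde u_{j-1}^k, \tilde u_j^k, \tilde u_{j+1}^k\}\subset (q,1)$, and analogously for $\tilde v$. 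At the boundary nodes $j=0,J$, the standard implementation of Neumann conditions (ghost values $\tilde u_{-1}^k := \tilde u_1^k$, $\tilde u_{J+1}^k := \tilde u_{J-1}^k$) or of periodic conditions ($\tilde u_{-1}^k := \tilde u_{J-1}^k$, $\tilde u_{J+1}^k := \tilde u_1^k$) again produces a convex combination of values in $(q,1)$, so the invariance extends to all $j$. When $d=0$ the $\tilde v$-step is skipped by definition, and the argument is unaffected.

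\medskip

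Combining the two half-steps completes the inductive step: if $(u_j^k,v_j^k)\in(q,1)^2$ for all $j$, then $\rm{(D_o)}$ produces $(u_j^{k+1},v_j^{k+1})\in(q,1)^2$, and the subsequent $\rm{(D_h)}$ applied to these values as initial data yields $(\tilde u_j^{k+1},\tilde v_j^{k+1})\in(q,1)^2$, which are then relabelled as the inputs to the next $\rm{(D_o)}$ step. The induction closes, giving $u_j^k,v_j^k\in(q,1)$ for every $j$ and $k$.
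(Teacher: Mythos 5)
Your proposal is correct and follows essentially the same route as the paper: resolve the semi-implicit reaction step $\rm{(D_o)}$ explicitly, check algebraically that both $q$ and $1$ remain strict barriers (your identities $g_u - qf_u = u_j^k(1-q)/\varepsilon$ and $f_u - g_u = hv_j^k(1-q)/(u_j^k+q)$ reproduce exactly the paper's displayed fractions), and invoke the discrete maximum principle for FTCS under ${\it\Delta}t/{\it\Delta}x^2 \leq 1/\max\{2,2d\}$. The only difference is that you spell out the convex-combination argument and the boundary/ghost-node treatment for $\rm{(D_h)}$, which the paper simply cites as well known.
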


\begin{proof}
By Theorem~\ref{de} and the linear stability conditions, it holds that $u_j^k, v_j^k \geq 0$ for all $j$ and $k$. The induction in $k$ is used. Let $u_j^k, v_j^k \in (q, 1)$. We first check that $u_j^{k+1}, v_j^{k+1} > q$ by $\rm{(D_o)}$. It turns out that
\begin{align*}
  u_j^{k+1}-q & = \frac{u_j^k + u_j^k {\it\Delta} t/\varepsilon + h q v_j^k {\it\Delta} t/(u_j^k+q)}{1 + u_j^k {\it\Delta} t/\varepsilon + h v_j^k {\it\Delta} t/(u_j^k+q)} - q \\
  & = \frac{(u_j^k - q) + (1 - q) u_j^k {\it\Delta} t/\varepsilon}{1 + u_j^k {\it\Delta} t/\varepsilon + h v_j^k {\it\Delta} t/(u_j^k+q)} > 0
\end{align*}
by $u_j^k > q$ and $q \in (0, 1)$. Similarly, we have
\[
  v_j^{k+1}-q = \frac{v_j^k + u_j^k {\it\Delta} t}{1 + {\it\Delta} t} - q
  = \frac{(v_j^k - q) + (u_j^k - q) {\it\Delta} t}{1 + {\it\Delta} t} > 0.
\]
One can also easily see that
\begin{align*}
  1-u_j^{k+1} & = \frac{(1-u_j^k) + (1 - q) h v_j^k {\it\Delta} t/(u_j^k+q)}{1 + u_j^k {\it\Delta} t/\varepsilon + h v_j^k {\it\Delta} t/(u_j^k+q)} > 0, \\
  1-v_j^{k+1} & = \frac{(1-v_j^k)+(1-u_j^k){\it\Delta} t}{1+{\it\Delta} t} > 0.
\end{align*}

On $\rm{(D_h)}$, it is well-known that the linear stability condition produces the maximum principle for numerical solution.
This completes the proof.
\end{proof}

\begin{remark}{\rm
{\rm{(i)}}~This assertion implies that $S_{\it\Delta} := (q, 1)^2$ is an invariant region for numerical solutions to alternate $\rm{(D_o)}$ and $\rm{(D_h)}$.\\
{\rm{(ii)}}~The authors believe that one can take the initial data, freely. It seems to hold that if $u_j^0, v_j^0 \geq 0$ with $u_{j_1}^0, v_{j_2}^0 > 0$ for some $j_1$ and $j_2$, then there exists a $k_0 \in {\mathbb N}$ such that $u_j^k, v_j^k \in (q, 1)$ for $k \geq k_0$ and $j$. This means that absorbing sets for numerical solutions exist in $S_{\it\Delta}$.\\
{\rm{(iii)}}~The similar results on the predator-prey models can be obtained. The reader can find the details on PDE in \cite{NST19} and references therein.
}\end{remark}

%
%

\end{document}